\definecolor{NU100}{RGB}{78,42,132}
\definecolor{NU90}{RGB}{91,59,140}
\definecolor{NU80}{RGB}{104,76,150}
\definecolor{NU70}{RGB}{118,93,160}
\definecolor{NU60}{RGB}{131,110,170}
\definecolor{NU50}{RGB}{147,128,182}
\definecolor{NU40}{RGB}{164,149,195}
\definecolor{NU30}{RGB}{182,172,209}
\definecolor{NU20}{RGB}{204,196,223}
\definecolor{NU10}{RGB}{228,224,238}
\definecolor{NUblue1}{RGB}{37,90,124}
\definecolor{NUblue2}{RGB}{61,122,161}
\definecolor{NUblue3}{RGB}{106,179,227}
\definecolor{NUyellow1}{RGB}{194,179,49}
\definecolor{NUyellow2}{RGB}{253,236,86}
\definecolor{NUyellow3}{RGB}{255,240,110}
\definecolor{NUred1}{RGB}{171,43,86}
\definecolor{NUred2}{RGB}{223,76,125}
\definecolor{NUred3}{RGB}{246,106,153}
\definecolor{gray80}{gray}{0.80}
\definecolor{gray90}{gray}{0.90}
\tikzset{block/.style={%
		text height=1.6ex,text depth=0.25ex,
		rectangle,
		minimum size=12mm,inner xsep=4mm,inner ysep=2mm,
		very thick,draw
}}
\tikzset{Lap/.style={%
		text height=1.6ex,text depth=0.25ex,
		rectangle,
		minimum size=12mm,inner xsep=4mm,inner ysep=2mm,
		very thick,draw
}}
\tikzset{sum/.style={%
		circle,
		minimum size=1mm,inner xsep=1mm,inner ysep=1mm,
		very thick,draw}}
\tikzset{link/.style={->,very thick,>=stealth,rounded corners}}
\tikzset{vertex/.style={draw,circle,very thick,black,fill=NUblue3!80}}
\tikzset{colvertex/.style={draw,circle,very thick,black,fill=NUred3!80}}
\newcommand*{\tr}{%
	{\mathpalette\@tr{}}%
}
\newcommand*{\@tr}[2]{%
	\raisebox{\depth}{$\m@th#1\intercal$}%
}
\let\save@mathaccent\mathaccent
\newcommand*\if@single[3]{%
	\setbox0\hbox{${\mathaccent"0362{#1}}^H$}%
	\setbox2\hbox{${\mathaccent"0362{\kern0pt#1}}^H$}%
	\ifdim\ht0=\ht2 #3\else #2\fi
}
\DeclareMathOperator*{\argmin}{arg\,min}
\newcommand*{\1}{\ensuremath{\mathbb{1}}}
\newcommand*{\La}{\ensuremath{\mathcal{L}}}
\newcommand*{\R}{\ensuremath{\mathbb{R}}}
\newcommand*{\n}{\mkern-1.5mu}
\theoremstyle{plain}
\newtheorem{theorem}{Theorem}
\theoremstyle{definition}
\newtheorem{remark}{Remark}
\title{\LARGE \bf Self-Healing First-Order Distributed Optimization}
\author{Israel L.\ Donato Ridgley$^1$, Randy A.\ Freeman$^{1,3,4}$, and Kevin M.\ Lynch$^{2,3,4}$%
	\thanks{All authors are affiliated with Northwestern University, Evanston, IL 60208 USA (e-mail: israelridgley2023@u.northwestern.edu; freeman@northwestern.edu; kmlynch@northwestern.edu).}%
	\thanks{$^1$Department of Electrical \& Computer Engineering; $^2$Department of Mechanical Engineering; $^3$Northwestern Institute on Complex Systems; $^4$Center for Robotics and Biosystems}%
}
\begin{document}
	
	\maketitle
	\thispagestyle{empty}
	\pagestyle{empty}

	\begin{abstract}
		In this paper we describe a parameterized family of first-order distributed optimization algorithms that enable a network of agents to collaboratively calculate a decision variable that minimizes the sum of cost functions at each agent.  These algorithms are \emph{self-healing} in that their correctness is guaranteed even if they are initialized randomly, agents drop in or out of the network, local cost functions change, or communication packets are dropped.  Our algorithms are the first single-Laplacian methods to exhibit all of these characteristics. We achieve self-healing by sacrificing internal stability, a fundamental trade-off for single-Laplacian methods. 
	\end{abstract}
	
	\section{Introduction}
	\label{sec:intro}
	In this paper we study the distributed optimization problem, in which each agent in a network of $n$ agents calculates a decision vector that minimizes a global additive objective function of the form $f(\cdot)=\sum_i f_i(\cdot)$, where $f_i$ denotes the local convex objective function known only to agent $i$. Specifically, each agent maintains a local estimate $x_i$ of the global minimizer
	\begin{equation}
		x_{\text{opt}} = \argmin_{\theta} \sum_i f_i(\theta),
	\end{equation}
	which we assume is unique.
	The agents reach consensus $x_i = x_{\text{opt}}$ by computing the gradients of their local objective functions $\nabla\n f_i(x_i)$ and passing messages along the links of the communication network.
	
	Distributed optimization problems of this form have broad application. For example, a distributed set of servers or sensors could perform a learning task (e.g., classification) using their local data without uploading it to a central server for bandwidth, resiliency, or privacy reasons~\cite{forcangia10}. Swarms of robots can use distributed optimization to plan motions to solve the rendezvous problem \cite{rig08}. 
	
	The optimization of a collective cost function in a network setting has seen considerable interest over the last decade \cite{sunvanles20,shiqingwuyin15,lishiyan19,jak19,nedolsshi17,quli18,yuayinzhasay19p1,yuayinzhasay19p2}.
	Recently, several authors have adapted methods from control theory to study distributed optimization algorithms as linear systems in feedback with uncertainties constrained by integral quadratic constraints (IQCs) \cite{lesrecpac16,sunhules17,sunvanles20}. These works have made it possible to more easily compare the various known algorithms across general classes of cost functions and graph topologies.
	
	The work \cite{sunvanles20} uses these techniques to describe several recent distributed optimization algorithms within a common framework, then describes a new algorithm within that framework that achieves a superior worst-case convergence rate. However, all of the algorithms considered in \cite{sunvanles20}, including the authors' SVL algorithm, share a common undesirable trait:
	to reach the correct solution, their states must start in a particular subspace of the overall global state space and remain on it at every time step.
	If for any reason the state trajectories 
	leave this 
	subspace (e.g., incorrect initialization, dropped packets, 
	computation errors, agents leaving the network, changes to objective functions due to continuous data collection), then the system will no longer converge to the minimizer.
	Such methods cannot automatically recover from disturbances or other faults that displace their trajectories from this subspace; in other words, they are not \textit{self-healing}.
	
	In this paper, we extend our results from dynamic average consensus estimators \cite{ridfrelyn20,kiavancorfrelynmar19} to design a family of distributed optimization algorithms whose trajectories 
	need not evolve on a 
	pre-defined subspace.  We call such algorithms \emph{self-healing}.
	In practice, this means that our algorithms
	can be arbitrarily initialized,
	agents can join or leave the network at will, packets can be lost or corrupted, and agents can change their objective functions as necessary, such as when they collect new data. In order to handle the particular case of lost packets, we modify our algorithms with a low-overhead packet loss protocol; this modification is possible because our methods are self-healing.
	
	We refer to distributed optimization algorithms that communicate one or two variables (having the same vector dimension as the decision variable $x_i$) per time step as single- and double-Laplacian methods, respectively. Examples of single-Laplacian methods are SVL and NIDS, while examples of double-Laplacian methods are uEXTRA and DIGing \cite{sunvanles20,lishiyan19,jak19,nedolsshi17,quli18}. Our algorithms are the first self-healing single-Laplacian methods that converge to the exact (rather than an approximate) solution. They achieve self-healing by sacrificing internal stability, a fundamental trade-off for single-Laplacian methods. 
	In particular, each agent will have an internal state that grows linearly in time in steady state, but because such growth is not exponential it
	will 
	not cause any numerical instabilities 
	unless run over long time horizons. Double-Laplacian methods can achieve both internal stability and self-healing, but they require twice as much communication per time step 
	and converge no faster than single-Laplacian methods \cite{sunvanles20,kiavancorfrelynmar19}.
	
	\section{Preliminaries and Main Results}
	
		\subsection{Notation and terminology}
		We adopt notation similar to that in \cite{sunvanles20}.
		Let $\1_n$ be the $n$-dimensional column vector of all ones, $I_n$ be the identity matrix in $\R^{n \times n}$, and $\Pi_n = \frac{1}{n}\1 \1^\tr$ be the projection matrix onto the vector $\1_n$. We drop the subscript $n$ when the size is clear from context. We refer to the one-dimensional linear subspace of $\R^n$ spanned by the vector $\1_n$ as the \emph{consensus direction} or the \emph{consensus subspace}. We refer to the $(n-1)$-dimensional subspace of $\R^n$ associated with the projection matrix $(I_n - \Pi_n)$ as the \emph{disagreement direction} or subspace.
		  
		The variable $z$ represents the complex frequency of the $z$-transform.
		Subscripts denote the agent index whereas superscripts denote the time index. The symbol $\otimes$ represents the Kronecker product. $A^+$ indicates the Moore-Penrose inverse of $A$. Symmetric quadratic forms $x^\tr\n A x$ are written as $[\star]^\tr\n A x$ 
		to save space when $x$ is long. The local decision variables are $d$-dimensional and represented as a row vector, i.e., $x_i \in \R^{1 \times d}$, and the local gradients are a map $\nabla\n f_i : \R^{1 \times d} \rightarrow \R^{1 \times d}$. The symbol $||\cdot||$ refers to the Euclidean norm of vectors and the spectral norm of matrices.
		
		We model a network of $n$ agents participating in a distributed computation as a 
		weighted digraph $\mathcal{G} = (\mathcal{V},\mathcal{E})$, where $\mathcal{V} = \{1,...,n\}$ is the set of $n$ nodes (or vertices) and $\mathcal{E}$ is the set of edges such that if $(i,j) \in \mathcal{E}$ then node $i$ can receive information from $j$. We make use of the \textit{weighted graph} \textit{Laplacian} $\La \in \R^{n \times n}$ associated with $\mathcal{G}$ such that $-\La_{ij}$ is the weight on edge $(i,j)\in \mathcal{E}$, $\La_{ij}=0$ when $(i,j)\not\in\mathcal{E}$ and $i\neq j$, and the diagonal elements of $\La$ are 
		$\La_{ii} = -\sum_{j\neq i} \La_{ij}$,
		so that $\La \1 = 0$. 
		We define $\sigma = ||I-\Pi-\La||$, which is a parameter related to the edge weights and the graph connectivity.
		
	
		Throughout this work we stack variables and objective functions such that
		\[x^k = \begin{bmatrix}
			x_1^k \\
			\vdots \\
			x_n^k
		\end{bmatrix} \in \R^{n\times d} \;\; \text{and} \;\;
		\nabla\n F(x^k) = \begin{bmatrix}
			\nabla\n f_1(x_1^k) \\
			\vdots \\
			\nabla\n f_n(x_n^k)
		\end{bmatrix} \in \R^{n\times d}.\]
		
		\subsection{Assumptions}
		
		\begin{enumerate}[itemsep=0.25em,label=\textbf{(A\arabic*)},%
			align=left,leftmargin=*,series=assumptions]
			
			\item Given $0< m \leq L$, we assume that the local gradients are sector bounded on the interval $(m,L)$, meaning that they satisfy the quadratic inequality 
			\begin{equation*} \hspace*{-0.2in}
				[\star]^\tr
				\begin{bmatrix}
					-2mLI_d & (L+m)I_d\\
					(L+m)I_d & -2I_d
				\end{bmatrix}
				\begin{bmatrix}
					(x_i-x_{\text{opt}})^\tr\\
					(\nabla\n f_i(x_i)-\nabla\n f_i(x_{\text{opt}}))^\tr
				\end{bmatrix} \geq 0
			\end{equation*}
			for all $x_i \in \R^{1\times d}$, where $x_{\text{opt}}$ satisfies $\sum_{i=1}^n \nabla\n f_i(x_{\text{opt}}) = 0$. 
			We define the condition ratio as $\kappa = \frac{L}{m}$, which captures the variation in the curvature of the objective function. \label{a:1}
			
			\item The graph $\mathcal{G}$ is strongly connected. \label{a:2}
			\item The graph $\mathcal{G}$ is weight balanced, meaning that $\1^{\n\tr}\n \La = 0$. \label{a:3}
			\item The weights of $\mathcal{G}$ are such that $\sigma = ||I-\Pi-\La|| < 1$. \label{a:4}
		\end{enumerate}

		\begin{remark}
			Assumption \ref{a:1} is known as a \textit{sector IQC} (for a more detailed description see \cite{lesrecpac16}) and is satisfied when the local objective functions are $m$-strongly convex with $L$-Lipschitz continuous gradients.
		\end{remark}
		
		\begin{remark}
			Throughout this paper we assume without loss of generality that the dimension of the local decision and state variables is $d=1$.
		\end{remark}
		
		\begin{remark}
			Under appropriate conditions on the communications network, the agents can self-balance their weights in a distributed way to satisfy \ref{a:3}; 
			for example, they can use a scalar consensus filter like push-sum (see Algorithm~12 in \cite{haddomcha18}).
		\end{remark}

		\subsection{Results}
		In the following sections we present a parameterized family of distributed, synchronous, discrete-time algorithms to be be run on each agent such that, under assumptions \ref{a:1}-\ref{a:4}, we achieve the following:
		
		\begin{description}[itemsep=0.25em]
			\item[Accurate convergence:] in the absence of disturbances or other faults, the local estimates $x_i$ converge to the optimizer $x_{\text{opt}}$ with a linear rate.
			\item[Self-healing:] the system state trajectories need not evolve on 
			a pre-defined
			subspace and will recover from events such as arbitrary initialization, temporary node failure, computation errors,
			or changes in local objectives.
			\item[Packet loss protocol:] if agents are permitted a state of memory for each of their neighbors, they can implement a packet loss protocol that allows computations to continue in the event communication is temporarily lost. This extends the self-healing of the network to packet loss in a way that is not possible if the system state trajectories are required to evolve on a pre-defined subspace.
		\end{description}
		
		First we present the synthesis and analysis of our algorithm along with its performance relative to existing methods. Then we demonstrate via simulation that our algorithm still convergences under high rates of packet loss.

	\section{Synthesis of Self-Healing\\ Distributed Optimization Algorithms}
		\subsection{Canonical first-order methods}
		As a motivation for our algorithms, we use the canonical form first described in \cite{sunvanles19} and later used as the SVL template \cite{sunvanles20}. When the communication graph is constant, many single-Laplacian methods 
		such as SVL, EXTRA and Exact Diffusion can be described in this form \cite{shiqingwuyin15,yuayinzhasay19p1,yuayinzhasay19p2,sunvanles19,sunvanles20}, which is depicted as a block diagram in Figure~\ref{fig:block}. Algorithms representable by the SVL template can also be expressed as a state space system $G$ in feedback with an uncertain and nonlinear block containing the objective function gradients $\nabla\n F(\cdot)$ and the Laplacian $\mathcal{L}$ shown in Figure \ref{fig:feedback}, where 
		\begin{gather}
			G = \left[ \begin{array}{c|c|c}
				A & B_u & B_v \\
				\hline
				C_x & D_{xu} & D_{xv} \\
				\hline
				C_y & D_{yu} & D_{yv}\end{array}\right] = \left[ \begin{array}{cc|c|c}
				1 & \beta & -\alpha & -\gamma \\
				0 & 1 & 0 & -1 \\ 
				\hline
				1 & 0 & 0 & -\delta \\
				\hline
				1 & 0 & 0 & 0\end{array}\right] \otimes I_n.
		\end{gather}
	    We would like to alert the reader to a small notational difference between our work and \cite{sunvanles20}: in this work, the variable $x$ is the input to the gradients and the variable $y$ is the input to the Laplacian, whereas in \cite{sunvanles20} $y$ is the input to the gradients and $z$ is the input to the Laplacian (we cannot use $z$ here because we already use it as the frequency variable of the $z$-transform).
	    
		Algorithms representable by the SVL template, and more broadly all existing first-order methods with a single Laplacian, require that the system trajectories evolve on 
		a pre-defined
		subspace. From our work with average consensus estimators \cite{ridfrelyn20,kiavancorfrelynmar19}, we know that these drawbacks arise from the positional order of the Laplacian and integrator blocks. When the Laplacian feeds into the integrator, the output of the Laplacian cannot drive the integrator state away from the consensus subspace, which leads to an observable but uncontrollable mode. If the integrator state is initialized in the consensus subspace, or it is otherwise disturbed there, the estimate of the optimizer will contain an uncorrectable error.
		 Switching the order of the Laplacian and integrator renders the integrator state controllable but causes it to become inherently unstable because the integrator output in the consensus direction is disconnected from the rest of the system. We exploit this trade-off to develop self-healing distributed optimization algorithms with only a single Laplacian.
		\begin{figure}
			\centering
			\begin{tikzpicture}[node distance=0.6,>=stealth]
				\node (sum) [sum] {};
				\node (sss) [below left=0.01cm of sum] {$-$};
				\node (Int1) [block, right=0.8cm of sum] {${\dfrac{1}{z-1}I_n}$};
				\node (X) [right=of Int1] {};
				\node (ylabel) [above=0cm of X] {$y^k$};
				\node (L) [block, right=of X] {${\mathcal{L}}$};
				
				\node (d) [block, below= 0.2cm of L] {${\delta I_n}$};
				\node (sum2) [sum, left=of d] {};
				\node (xlabel) [above left=0 cm of sum2] {$x^k$};
				\node (sss) [below left=0.01cm of sum2] {$-$};
				\node (grad) [block, left=of sum2] {${\alpha \nabla\n F(\cdot)}$};
				\node (sum3) [sum] at (grad-|sum) {};
				\node (au) [above right=0cm of sum3] {$\alpha u^k$};
				
				\node (gam) [block, below=0.2cm of d] {${\gamma I_n}$};
				\node (sum4) [sum] at (gam-|sum3) {};

				\node (Int2) [block, below=0.2cm of gam] {${\dfrac{1}{z-1} I_n}$};
				\node (beta) [block, left=1.3cm of Int2] {${\beta I_n}$};

				\draw [link] (sum) -- (Int1);
				\draw [link] (Int1) -- (L);
				\draw [link] (Int1) -- (Int1-|sum2) -- (sum2);
				
				\node (off1) [right=of d] {};
				\node (j1) at (off1|-L) {}; 
				
				\node (v) [above=0cm of j1] {$v^k$};
				
				\draw [link] (L) -- (L-|j1) -- (j1|-d) -- (d);
				\draw [link] (L) -- (L-|j1) -- (j1|-gam) -- (gam);
				\draw [link] (L) -- (L-|j1) -- (j1|-Int2) -- (Int2);
				
				\draw [link] (d) -- (sum2);
				\draw [link] (sum2) -- (grad);
				\draw [link] (grad) -- (sum3);
				\draw [link] (sum3) -- (sum);
				\draw [link] (gam) -- (sum4);
				\draw [link] (sum4) -- (sum3);
				\draw [link] (Int2) -- (beta);
				\draw [link] (beta) -- (beta-|sum4) -- (sum4);
				
			\end{tikzpicture}
			\caption{The SVL template from \cite{sunvanles20} for first-order, single-Laplacian distributed optimization.}
			\label{fig:block}
		\end{figure}
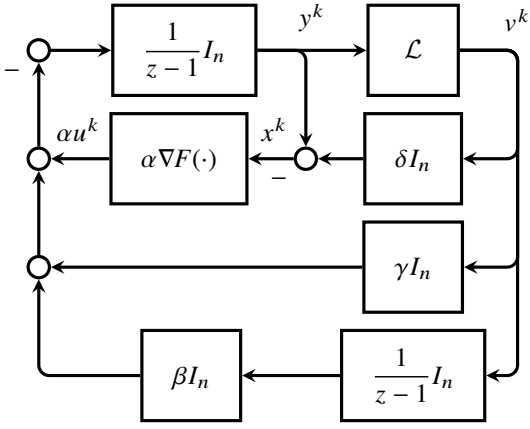
	
		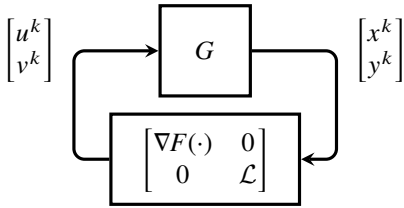
\begin{figure}
			\centering
			\begin{tikzpicture}
				\node (G) [block] {$G$};
				\node (NL) [block, below= 0.2cm of G] {${\begin{bmatrix} \nabla\n F(\cdot) & 0 \\ 0 & \mathcal{L} \end{bmatrix}}$};
				\node (off1) [left=of G] {};
				\node (inlabel) [left=0cm of off1] {${\begin{bmatrix}u^k\\v^k\end{bmatrix}}$};
				\node (off2) [right=of G] {};
				\node (outlabel) [right=0cm of off2] {${\begin{bmatrix}x^k\\y^k\end{bmatrix}}$};
				\draw [link] (G) -- (G-|off2) -- (off2|-NL) -- (NL);
				\draw [link] (NL) -- (NL-|off1) -- (off1|-G) -- (G);
			\end{tikzpicture}
			\caption{Distributed optimization algorithms 
			represented as a feedback interconnection of an LTI system $G$ and an uncertain block containing the gradients and
			the graph Laplacian.}
			\label{fig:feedback}
		\end{figure}
	
		\subsection{Factorization and integrator location}
		In the block diagram depicted in Figure \ref{fig:block}, it is unclear how to switch the Laplacian with the bottom integrator in a straightforward way. Instead we factor an integrator out of the $G(z)$ block of Figure \ref{fig:feedback},
		\begin{gather}
			\label{eq:Gz}
			G(z) = \begin{bmatrix}
				\dfrac{-\alpha}{z-1} & -\dfrac{\delta z^2 + (\gamma - 2\delta)z + (\beta+\delta-\gamma)}{(z-1)^2}\\[8pt]
				\dfrac{-\alpha}{z-1} & -\dfrac{\gamma z + (\beta - \gamma)}{(z-1)^2}
			\end{bmatrix}\otimes I_n\\
		\label{eq:factor}
		= \begin{bmatrix}
			\dfrac{-\alpha}{z-1} & -\dfrac{z-1+\zeta}{z-1} \\[8pt]
			\dfrac{-\alpha}{z-1} & \dfrac{-\gamma z - (\beta - \gamma)}{(z-1)(\delta z + \eta - \delta)}
		\end{bmatrix}
		\begin{bmatrix}
			1 & 0 \\[8pt]
			0 & \dfrac{\delta z + \eta - \delta}{z-1}
		\end{bmatrix}\otimes I_n,
		\end{gather}
        where
		\begin{gather}
		\eta = \gamma - \delta \zeta \;\;\text{and}\;\;\zeta = \begin{cases}
				\dfrac{\beta}{\gamma}, & \delta = 0\\
				\dfrac{\gamma - \sqrt{\gamma^2 - 4\beta \delta}}{2\delta}, & \text{otherwise.}
			\end{cases}
		\end{gather}
		 Swapping the order of the component matrices yields our new family of algorithms (where $G_{\n s}$ replaces $G$):
		\begin{align}
			G_{\n s}(z) &=
			\begin{bmatrix}
				1 & 0 \\[8pt]
				0 & \dfrac{\delta z + \eta - \delta}{z-1}
			\end{bmatrix}
			\begin{bmatrix}
				 \dfrac{-\alpha}{z-1} & -\dfrac{z-1+\zeta}{z-1} \\[8pt]
				 \dfrac{-\alpha}{z-1} & \dfrac{-\gamma z - (\beta - \gamma)}{(z-1)(\delta z + \eta - \delta)}
			\end{bmatrix}\otimes I_n\notag\\
			&=
			\begin{bmatrix}
				-\alpha \dfrac{1}{z-1} & -\dfrac{z-1+\zeta}{z-1} \\[8pt]
				-\alpha \dfrac{\delta z + \eta - \delta}{(z-1)^2} & -\dfrac{\gamma z + \beta - \gamma}{(z-1)^2}
			\end{bmatrix}\otimes I_n.
		\end{align}%
	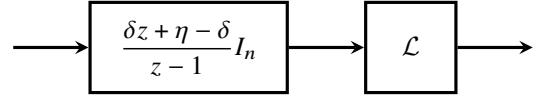
\begin{figure}
		\begin{tikzpicture}
			\node (L) [block] {$\La$};
			\node (int) [block, left=of L] {${\dfrac{\delta z + \eta - \delta}{z-1}}I_n$};
			\node (off1) [left=of int] {};
			\node (off2) [right=of L] {};
			
			\draw [link] (off1) -- (int);
			\draw [link] (int) -- (L);
			\draw [link] (L) -- (off2);
		\end{tikzpicture}
		\caption{The output of the integrator now feeds into the Laplacian, converting an uncontrollable and observable mode in the original SVL template to a controllable and unobservable one.}
		\label{fig:int_order}
	\end{figure}%
		Now the output of the integrator feeds directly into the Laplacian, as depicted in Figure \ref{fig:int_order}. We assume that our parameter choices satisfy
		\begin{align}
          \gamma^2 &\geq 4\beta\delta
		\end{align}
		so that the zeros of $G_{\n s}$ remain real and thus the system can be implemented with real-valued signals. The corresponding distributed algorithm is described in Algorithm~\ref{alg:1}, where $w_1$ and $w_2$ are the internal states of $G_{\n s}$, and the compact state space form is 
		\begin{equation}
			G_{\n s} = \left[ \begin{array}{cc|c|c}
				1 & 0 & -\alpha & -\zeta \\
				1 & 1 & 0 & -1 \\ 
				\hline
				1 & 0 & 0 & -1 \\
				\hline
				\delta & \eta & 0 & 0\end{array}\right] \otimes I_n.
		\end{equation}
		
		\begin{remark}
			The factorization in \eqref{eq:factor} is not unique; we chose it because it
			leads to a method still having only two internal states per agent.
			There may be other useful factorizations.
		\end{remark}
		
	\begin{algorithm}[t]
		\SetAlgoLined
		\LinesNotNumbered
		\KwSty{Initialization:} Each agent $i\in\{1,...,n\}$ chooses $w_{1i}^0,w_{2i}^0 \in \R^{1 \times d}$ arbitrarily. $\La \in \R^{n \times n}$ is the graph Laplacian.\\
		\For{$k=0,1,2,...$}{
			\For{$i\in\{1,...,n\}$}{
				\KwSty{Local communication}\\
				$y_i^k = \delta w_{1i}^k + \eta w_{2i}^k$\\[2pt]
				$v_i^k = \sum_{j=1}^n\mathcal{L}_{ij} y_j^k$\\
				\KwSty{Local gradient computation}\\
				$x_i^k = w_{1i}^k - v_i^k$\\[2pt]
				$u_i^k = \nabla\n f_i(x_i^k)$\\
				\KwSty{Local state update}\\
				$w_{1i}^{k+1} = w_{1i}^k - \alpha u_i^k - \zeta v_i^k$\\[2pt]
				$w_{2i}^{k+1} = w_{1i}^k + w_{2i}^k - v_i^k$
		}}
		\caption{\small Self-Healing Distributed Gradient Descent}
		\label{alg:1}
	\end{algorithm}
	
	\section{Stability and Convergence Rates Using IQCs}
		\subsection{Projection onto the disagreement subspace}
		As written, our family of algorithms is internally unstable. We use the projection matrix $(I-\Pi)$ to eliminate the instability from the global system without affecting $x^k$. This procedure is a centralized calculation that cannot be implemented in a distributed fashion, but it allows us to analyze the convergence properties of the distributed algorithm. 
		
		Consider the steady-state values $(w_1^\star,x^\star,u^\star,v^\star)$ and suppose $w_2^k$ contains a component in the $\1$ direction. Then that component does not affect the aforementioned values because it is an input to the Laplacian $\La$ (and lies in its nullspace); however, it grows linearly in time due to the $w_2$ update. Thus the system has an internal instability that is unobservable from the output of the bottom block in Figure~\ref{fig:feedback}. Since the component of $w_2^k$ in the consensus direction is unobservable to the variables $(w_1^k,x^k,u^k,v^k)$, we can throw it away without affecting their trajectories. Using the transformation $\hat{w}_2^k = (I - \Pi)w_2^k$, our state updates become
		\begin{align}
			\label{eq:up1}
			w_1^{k+1} &= w_1^k - \alpha u^k - \zeta v^k\\
			\label{eq:up2}
			\hat{w}_2^{k+1} &= (I-\Pi)w_1^k + (I-\Pi)\hat{w}_2^k - (I-\Pi)v^k \\
			\label{eq:up3}
			x^k &= w_1^k - v^k \\
			\label{eq:up4}
			\hat{y}^k &= \delta w_1^k + \eta \hat{w}_2^k \\
			u^k &= \nabla\n F(x^k) \\
			\label{eq:up6}
			v^k &= \mathcal{L} \hat{y}^k,
		\end{align}
		where $y^k$ was replaced with $\hat{y}^k$ in \eqref{eq:up4} and \eqref{eq:up6} to accommodate $\hat{w}_2^k$. These updates 
		lead to the state-space system
		\begin{equation}\label{eq:gp}
			G_m = \left[ \begin{array}{cc|c|c}
				I & 0 & -\alpha I & -\zeta I \\
				I-\Pi & I-\Pi & 0 & -(I-\Pi) \\ 
				\hline
				I & 0 & 0 & -I \\
				\hline
				\delta I & \eta I & 0 & 0\end{array}\right].
		\end{equation}
		\subsection{Existence and optimality of a fixed point}
		Now that we have eliminated the inherent instability of the global system, we can state the following about the fixed points:
		
		\begin{theorem}
			For the system described by $G_m$, there exists at least one fixed point $(w_1^\star,\hat{w}_2^\star, x^\star,\hat{y}^\star,u^\star,v^\star)$, and any such fixed point has $x^\star$ in the consensus subspace such that $x_i^\star = x_{\textnormal{opt}}$ for all $i \in \{1,\dots, n\}$, i.e., any fixed point of the system is optimal.
		\end{theorem}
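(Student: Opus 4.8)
The plan is to establish the two assertions in turn: first that \emph{every} fixed point of $G_m$ is optimal, and then that at least one fixed point exists. The optimality part itself breaks into two steps --- showing $x^\star$ lies in the consensus subspace, and then identifying its common value with $x_{\textnormal{opt}}$ --- and the whole argument uses only \ref{a:1}--\ref{a:3} (\ref{a:4} is not needed here).

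For optimality, I would first record the fixed-point equations obtained from \eqref{eq:up1}--\eqref{eq:up6} by setting $w_1^{k+1}=w_1^k=w_1^\star$ and $\hat{w}_2^{k+1}=\hat{w}_2^k=\hat{w}_2^\star$:
\begin{gather}
  \alpha u^\star + \zeta v^\star = 0,\qquad
  \hat{w}_2^\star = (I-\Pi)\bigl(w_1^\star + \hat{w}_2^\star - v^\star\bigr),\notag\\
  x^\star = w_1^\star - v^\star,\qquad \hat{y}^\star = \delta w_1^\star + \eta\,\hat{w}_2^\star,\notag\\
  u^\star = \nabla\n F(x^\star),\qquad v^\star = \La\,\hat{y}^\star.\notag
\end{gather}
The second equation exhibits $\hat{w}_2^\star$ as an element of the range of $(I-\Pi)$, so $(I-\Pi)\hat{w}_2^\star = \hat{w}_2^\star$; substituting this back and cancelling $\hat{w}_2^\star$ yields $(I-\Pi)(w_1^\star - v^\star) = (I-\Pi)x^\star = 0$, hence $x^\star = \1\bar{x}$ with $\bar{x} = \tfrac1n\1^\tr x^\star$. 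Next I would combine the first equation with weight balance: from $v^\star = \La\hat{y}^\star$ and $\1^\tr\La = 0$ (\ref{a:3}) we get $\1^\tr v^\star = 0$, so $\alpha\,\1^\tr u^\star = 0$; assuming $\alpha\neq 0$ this forces $\sum_i\nabla\n f_i(\bar{x}) = \1^\tr\nabla\n F(\1\bar{x}) = 0$. Since strong convexity (\ref{a:1}) makes the minimizer the unique stationary point of $\sum_i f_i$, we conclude $\bar{x} = x_{\textnormal{opt}}$, i.e.\ $x_i^\star = x_{\textnormal{opt}}$ for all $i$.

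For existence I would construct a fixed point explicitly --- this is possible precisely because the $\hat{w}_2$-equation pins $x^\star$ onto the consensus line, so no topological (Brouwer-type) argument is needed. Put $x^\star = \1 x_{\textnormal{opt}}$, $u^\star = \nabla\n F(x^\star)$ (so $\1^\tr u^\star = 0$ by the defining property of $x_{\textnormal{opt}}$), $v^\star = -\tfrac{\alpha}{\zeta}u^\star$ (so $\1^\tr v^\star = 0$ and $\alpha u^\star + \zeta v^\star = 0$; this uses $\zeta\neq0$), and $w_1^\star = x^\star + v^\star$ (so $x^\star = w_1^\star - v^\star$, and $(I-\Pi)(w_1^\star - v^\star) = (I-\Pi)\1 x_{\textnormal{opt}} = 0$, which reduces the $\hat{w}_2$-equation to the requirement that $\hat{w}_2^\star$ lie in the disagreement subspace). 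It then remains to choose such a $\hat{w}_2^\star$ with $\La(\delta w_1^\star + \eta\hat{w}_2^\star) = v^\star$, i.e.\ $\eta\La\hat{w}_2^\star = v^\star - \delta\La w_1^\star$. Under \ref{a:2}--\ref{a:3}, $\La$ has rank $n-1$ with kernel $\operatorname{span}(\1)$, and since $\1$ is also a left null vector its range is exactly the disagreement subspace; as $\1^\tr(v^\star - \delta\La w_1^\star) = 0$, the right-hand side lies in that range, so $\hat{w}_2^\star = \tfrac1\eta\La^+(v^\star - \delta\La w_1^\star)$ solves the equation and automatically lies in the disagreement subspace. Setting $\hat{y}^\star = \delta w_1^\star + \eta\hat{w}_2^\star$ then makes $(w_1^\star,\hat{w}_2^\star,x^\star,\hat{y}^\star,u^\star,v^\star)$ a fixed point, completing the proof.

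The structural heart of the argument --- weight balance giving $\1^\tr v^\star = 0$, strong connectivity giving $\operatorname{range}\La$ equal to the disagreement subspace, and uniqueness of $x_{\textnormal{opt}}$ --- is routine, so I do not expect a genuine obstacle. The one place to be careful is degenerate parameter values ($\alpha=0$, $\zeta=0$, or $\eta=0$), where the explicit construction above must be adjusted or the parameters excluded; I would handle the nondegenerate case as sketched and dispatch the corner cases in a brief remark.
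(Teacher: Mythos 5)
Your proposal is correct and follows essentially the same route as the paper's proof: optimality of any fixed point via the $(I-\Pi)$-projected $\hat{w}_2$-equation forcing $(I-\Pi)x^\star=0$ and weight balance forcing $\1^\tr u^\star=0$, followed by the same explicit construction $x^\star=\1 x_{\textnormal{opt}}$, $v^\star=-\tfrac{\alpha}{\zeta}u^\star$, $w_1^\star=x^\star+v^\star$, with $\hat{w}_2^\star$ obtained from $\La^+$ (your equation $\eta\La\hat{w}_2^\star=v^\star-\delta\La w_1^\star$ is the paper's \eqref{eq:w2sol} after substitution). Your added checks --- that the right-hand side lies in the range of $\La$ and that degenerate parameter values need excluding --- are welcome details the paper leaves implicit, but they do not change the argument.
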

		
		\begin{proof}
		First, assume that the fixed point $(w_1^\star,\hat{w}_2^\star, x^\star,\hat{y}^\star,u^\star,v^\star)$ exists. To prove that the variable $x^\star$ lies in the consensus direction, we show that $(I-\Pi)x=0$. From \eqref{eq:up2} and \eqref{eq:up3} we have that
		\begin{align}
			(I-\Pi)w_1^\star &= (I-\Pi)v^\star \\
			(I-\Pi)x^\star &= (I-\Pi)w_1^\star - (I-\Pi)v^\star\\
			&= 0.
		\end{align}
		Thus $x_i^\star = x_j^\star$ for all $i,j \in \{1,\dots,n\}$. Next we show that $x_i^\star = x_{\text{opt}}$. From \eqref{eq:up1} then plugging in \eqref{eq:up6}, we have
		\begin{align}
			-\alpha u^\star - \zeta v^\star &= 0 \\
			u^\star &= -\frac{\zeta}{\alpha} v^\star = -\frac{\zeta}{\alpha} \mathcal{L} \hat{y}^\star \\
			\1^\tr u^\star &= -\frac{\zeta}{\alpha} \1^\tr \mathcal{L} \hat{y}^\star \\
			\sum_{i=1}^n u_i^\star &= 0 \\
			\rightarrow \sum_{i=1}^n \nabla\n f_i(x_i^\star) &= 0\\
			\rightarrow x_i^\star &= x_{\text{opt}} \; \forall \; i \in \{1,\dots,n\}.
		\end{align}
		Thus any fixed point is optimal.
	
		Next, to construct a fixed point we define
		\begin{equation}
			\begin{aligned}
				x^\star &= \1 x_{\text{opt}}, & u^\star &= \nabla\n f(x^\star) \\
				v^\star &= -\frac{\alpha}{\zeta} u^\star, & w_1^\star &= x^\star + v^\star .
			\end{aligned}
		\end{equation}
		Then $\hat{w}_2^\star$ is the solution to the equation
		\begin{equation}
			\label{eq:w2sol}
			\zeta \eta \mathcal{L} \hat{w}_2^\star = -\alpha(I-\delta \mathcal{L})u^\star.
		\end{equation}
		Since $\hat{w}_2^k = \La^+ \La w_2^k$ (i.e., $\hat{w}_2^k$ is in the row space of $\La$), we write $\hat{w}_2^\star$ in closed form as
		\begin{equation}
			\hat{w}_2^\star = \frac{\alpha}{\zeta \eta}\La^+(\delta L - I) u^\star.
		\end{equation}
		Finally, setting $\hat{y}^\star = \delta w_1^\star + \eta \hat{w}_2^\star$ completes the proof.
		\end{proof}
	
		\begin{remark}
			If the graph is switching but converges in time such that the limit of the sequence of Laplacians exists, as with a weight balancer, then a solution to \eqref{eq:w2sol} still exists and an optimal fixed point can still be found. Furthermore, the proof techniques in the following section still hold for switching Laplacians (see \cite{sunvanles20} for more information).
		\end{remark}
		\subsection{Convergence}
		Following the approaches in \cite{lesrecpac16,sunhules17,sunvanles20}, we prove stability using a set of linear matrix inequalities. First we split our modified system from \eqref{eq:gp} into consensus and disagreement components. We define 
		\begin{align}
			A_m &= A_p \otimes \Pi + A_q \otimes (I-\Pi) \\
			B_{mu} &= B_{pu} \otimes \Pi + B_{qu} \otimes (I-\Pi)\\
			B_{mv} &= B_{pv} \otimes \Pi + B_{qv} \otimes (I-\Pi)
		\end{align}
		\begin{equation}
		\begin{aligned}
			A_p &= \begin{bmatrix}1 & 0 \\ 0 & 0\end{bmatrix}, & B_{pu} &= \begin{bmatrix}-\alpha\\0\end{bmatrix}, & B_{pv}&=\begin{bmatrix}-\zeta \\ 0\end{bmatrix}\\
			A_q &= \begin{bmatrix}1 & 0\\ 1 & 1\end{bmatrix}, & B_{qu} &= \begin{bmatrix}
				-\alpha \\0\end{bmatrix} & B_{qv} &=\begin{bmatrix} -\zeta\\ -1
			\end{bmatrix}.
		\end{aligned}
		\end{equation}
		We also define the matrices
		\begin{equation}
			M_0 = \begin{bmatrix}
				-2mL & L+m \\
				L+m & -2
			\end{bmatrix} \; \; \text{and} \; \; M_1 =
		\begin{bmatrix}
			\sigma^2-1 & 1\\1 & -1
		\end{bmatrix}.
		\end{equation}
		
		Notice that $M_0$ is associated with the sector bound from \ref{a:1} and that $M_1$ is associated with the $(1-\sigma,1+\sigma)$ sector bound on $\La$ with inputs from the disagreement subspace.
		
		We now make a statement analogous to Theorem 10 in \cite{sunvanles20}.
		\begin{theorem}
			If there exists $P,Q \in \R^{2\times 2}$ and $\lambda_0,\lambda_1 \in \R$, with $P,Q\succ 0$ and $\lambda_0,\lambda_1\geq 0$  such that
			\begin{gather}
				\label{eq:LMI1}
				[\star]^\tr
				\left[ \begin{array}{cc|c}
					P & 0 & 0\\
					0 & -\rho^2P & 0\\
					\hline
					0 & 0 & \lambda_0M_0
				\end{array}\right]
				\left[ \begin{array}{cc}
					A_p & B_{pu} \\
					I & 0 \\
					\hline
					C_{x} & D_{xu} \\
					0 & I
				\end{array}\right] \preceq 0, \\	
				[\star]^\tr
				\label{eq:LMI2}
				\arraycolsep=2.5pt
				\left[ \begin{array}{cc|c|c}
					Q & 0 & 0 & 0\\
					0 & -\rho^2Q & 0 & 0\\
					\hline
					0 & 0 & \lambda_0M_0 & 0 \\
					\hline
					0 & 0 & 0 & \lambda_1M_1 \\
				\end{array}\right]
				\left[ \begin{array}{ccc}
					A_q & B_{qu} & B_{qv} \\
					I & 0 & 0 \\
					\hline
					C_{x} & D_{xu} & D_{xv} \\
					0 & I & 0 \\
					\hline
					C_{y} & D_{yu} & D_{yv}\\
					0 & 0 & I
				\end{array}\right]\preceq 0,
			\end{gather}
		then the following is true for the trajectories of $G_m$:
			\begin{equation}\label{eq:main}
				\Bigg \| \begin{bmatrix}
					w_1^k - w_1^\star\\[.1cm] \hat{w}_2^k - \hat{w}_2^\star
				\end{bmatrix} \Bigg \| \leq \sqrt{\operatorname{cond}(T)}\rho^k \Bigg \| \begin{bmatrix}
				w_1^0 - w_1^\star\\[.1cm] \hat{w}_2^0 - \hat{w}_2^\star
			\end{bmatrix} \Bigg \|
			\end{equation}
			for a fixed point $(w_1^\star,\hat{w}_2^\star, x^\star,\hat{y}^\star,u^\star,v^\star)$, where $T = P \otimes I_n + Q \otimes (I_n - \Pi_n)$ and $\operatorname{cond}(T) = \frac{\lambda_{\textnormal{max}}(T)}{\lambda_{\textnormal{min}}(T)}$ is the condition number of $T$. Thus the output $x^k$ of Algorithm 1 converges to the optimizer with the linear rate $\rho$.
		\end{theorem}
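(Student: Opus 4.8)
The plan is to follow the IQC/dissipativity framework of \cite{lesrecpac16,sunvanles20}: treat \eqref{eq:LMI1} and \eqref{eq:LMI2} as certificates that supply a quadratic storage function which decays by a factor $\rho^2$ at each step along the trajectories of $G_m$. First I would change coordinates to the error variables $\tilde{w}_1^k = w_1^k - w_1^\star$ and $\tilde{w}_2^k = \hat{w}_2^k - \hat{w}_2^\star$ (and correspondingly $\tilde{x},\tilde{y},\tilde{u},\tilde{v}$), noting that the affine system $G_m$ in feedback with the gradient and Laplacian blocks has the fixed point as an equilibrium, so the error dynamics are governed by the same $(A_m, B_{mu}, B_{mv}, C, D)$ matrices driven by the \emph{incremental} nonlinearities $\tilde{u}^k = \nabla F(x^k) - \nabla F(x^\star)$ and $\tilde{v}^k = \La \tilde{y}^k$. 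The key point is that assumption \ref{a:1} makes $(\tilde{x},\tilde{u})$ satisfy the pointwise quadratic inequality with matrix $M_0$, and the bound $\sigma = \|I-\Pi-\La\|$ from \ref{a:4} makes $(\tilde{y},\tilde{v})$, \emph{restricted to the disagreement subspace}, satisfy the sector inequality with matrix $M_1$; on the consensus subspace $\La$ contributes nothing, which is exactly why only $M_0$ appears in \eqref{eq:LMI1} and both $M_0,M_1$ in \eqref{eq:LMI2}.

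Next I would exploit the block structure $A_m = A_p\otimes\Pi + A_q\otimes(I-\Pi)$, etc. Because $\Pi$ and $I-\Pi$ are complementary orthogonal projections that commute with everything in sight, the error dynamics decouple into a consensus component (living in $\mathrm{range}(\Pi)$, governed by $(A_p,B_{pu},B_{pv})$) and a disagreement component (living in $\mathrm{range}(I-\Pi)$, governed by $(A_q,B_{qu},B_{qv})$). For the consensus component I would left- and right-multiply \eqref{eq:LMI1} by the stacked vector $(\,[\tilde{w}_1^k;\tilde{w}_2^k]_{\parallel};\ \tilde{u}^k_\parallel\,)$ — here the subscript denotes the consensus part — to obtain $V_p(k{+}1) - \rho^2 V_p(k) + \lambda_0\,(\text{sector term}) \le 0$ where $V_p(k) = [\star]^\tr P [\tilde{w}_1^k;\tilde{w}_2^k]_\parallel$; since $\lambda_0\ge 0$ and the sector term is $\ge 0$ by \ref{a:1}, this gives $V_p(k{+}1)\le\rho^2 V_p(k)$. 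Analogously, multiplying \eqref{eq:LMI2} by the disagreement part of $([\tilde{w}_1^k;\tilde{w}_2^k];\tilde{u}^k;\tilde{v}^k)$ yields $V_q(k{+}1) - \rho^2 V_q(k) + \lambda_0(\cdots) + \lambda_1(\cdots)\le 0$, hence $V_q(k{+}1)\le\rho^2 V_q(k)$ using that both sector terms are nonnegative.

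Then I would reassemble: define $V(k) = [\star]^\tr T [\tilde{w}_1^k;\tilde{w}_2^k]$ with $T = P\otimes I_n + Q\otimes(I_n-\Pi_n)$, which by the projection decomposition equals $V_p(k) + V_q(k)$, so $V(k{+}1)\le\rho^2 V(k)$ and therefore $V(k)\le\rho^{2k}V(0)$. Converting the weighted norm back to the Euclidean norm via $\lambda_{\min}(T)\|\cdot\|^2 \le V \le \lambda_{\max}(T)\|\cdot\|^2$ (valid since $P,Q\succ0$ force $T\succ0$) gives $\|[\tilde{w}_1^k;\tilde{w}_2^k]\| \le \sqrt{\operatorname{cond}(T)}\,\rho^k\,\|[\tilde{w}_1^0;\tilde{w}_2^0]\|$, which is \eqref{eq:main}. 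Finally, since $x^k = w_1^k - v^k$ with $v^k = \La\hat{y}^k$ a bounded linear function of the state, $\|x^k - x^\star\|$ is bounded by a constant times $\|[\tilde{w}_1^k;\tilde{w}_2^k]\|$ and hence also converges linearly at rate $\rho$; combined with the previous theorem, $x^\star = \1 x_{\text{opt}}$.

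The main obstacle I anticipate is the careful bookkeeping in the decoupling step: one must verify that the projections $\Pi, I-\Pi$ interact correctly with the Kronecker structure so that a single scalar $2\times 2$ LMI per subspace genuinely certifies decay of the full $2n$-dimensional quadratic form, and in particular that the disagreement-restricted sector IQC for $\La$ (with matrix $M_1$ and multiplier $\lambda_1$) is applied only to the $(I-\Pi)$ component — on the consensus component $\tilde{v}_\parallel = \Pi\La\hat{y} = 0$, so no $M_1$ term is needed there, which is precisely the asymmetry between \eqref{eq:LMI1} and \eqref{eq:LMI2}. A secondary subtlety is confirming that $\hat{w}_2^k$ stays in the row space of $\La$ along trajectories (so that the truncation $\hat{w}_2 = (I-\Pi)w_2$ is consistent with the fixed point constructed in the previous theorem) and that the sector bound on $\nabla F$ transfers to the incremental form $\nabla F(x^k) - \nabla F(x^\star)$ componentwise — both of which follow from \ref{a:1} and the definition of $x_{\text{opt}}$ but should be stated explicitly.
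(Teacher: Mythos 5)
Your argument is essentially the paper's argument: the published proof consists of the single sentence that \eqref{eq:main} ``follows directly from Theorem 4 of \cite{lesrecpac16},'' plus the observation that the remaining signals (and hence $x^k$ in Algorithm~1, whose trajectories agree with those of $G_m$ except for $\hat{w}_2^k$ and $\hat{y}^k$) inherit the rate $\rho$. What you have written out is exactly the dissipation argument underlying that citation, specialized to the consensus/disagreement splitting of $G_m$ as in Theorem~10 of \cite{sunvanles20}, so there is no difference in approach---only in how much work is delegated to the reference. Your identification of why $M_1$ appears only in \eqref{eq:LMI2} (namely $\Pi\La=0$ by weight balance, so the consensus channel sees no Laplacian input) and of the incremental sector IQC about the fixed point are the correct supporting facts.

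One bookkeeping point in your reassembly step deserves care. With $T=P\otimes I_n+Q\otimes(I_n-\Pi_n)$ exactly as written in the theorem statement, the quadratic form $[\star]^\tr T\p\xi$ with $\xi$ the stacked state error equals $V_p+V_q$ \emph{plus} the extra term $[\star]^\tr\bigl(P\otimes(I_n-\Pi_n)\bigr)\xi$, and the decay of that extra term is not certified by \eqref{eq:LMI1}, which constrains only the consensus dynamics $(A_p,B_{pu})$. Your identity $V=V_p+V_q$ requires $T=P\otimes\Pi_n+Q\otimes(I_n-\Pi_n)$, which is the Lyapunov matrix actually used in \cite{lesrecpac16,sunvanles20}; you should either adopt that reading of $T$ (treating the statement's $P\otimes I_n$ as a typo) or supply the missing decay estimate, since as literally stated the chain $V(k{+}1)\le\rho^2V(k)$ does not follow. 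With that correction your proof is complete and matches the intended one.
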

		
		\begin{proof}
			Equation \eqref{eq:main} follows directly from Theorem 4 of \cite{lesrecpac16}. Since the states of $G_m$ are converging at a linear rate $\rho$, the rest of the signals in the system (including $x^k$) converge to the optimizer at the same rate. Additionally, the trajectories of $G_m$ and $G_{\n s}$ (Algorithm 1) are the same, save for $\hat{w}_2^k$ and $\hat{y}^k$, so $x^k$ in Algorithm 1 also converges to the optimizer with linear rate $\rho$. 
		\end{proof}
		 
		 To test the performance of our algorithm, we used the parameters $\beta = 0.5, \gamma = 1, \delta=0.5$. These parameters were inspired by the NIDS/Exact Diffusion parameters presented in \cite{sunvanles19}; however, we have done no work to find parameters that optimize the convergence rate. We then solved the LMIs \eqref{eq:LMI1} and \eqref{eq:LMI2} using Convex.jl \cite{convexjl} with the MOSEK solver \cite{mosek}, performing a bisection search on $\rho$ to find the minimum worst-case convergence rate for a given $\kappa$, $\sigma$, and $\alpha$. We used Brent's method from Optim.jl \cite{optim} to determine the optimal $\alpha$. We plot our results for $\kappa=10$ in Figure~\ref{fig:perf} and include the results for SVL (reproduced from \cite{sunvanles20}) for comparison. Our algorithm with these parameter choices achieves the same performance as NIDS for the NIDS parameter choice $\mu = 1$ as shown in \cite{sunvanles20}. The worst-case convergence rate of our algorithm is subject to the same lower bound, $\rho\geq \max (\frac{\kappa-1}{\kappa+1},\sigma) $, found in \cite{sunvanles20}.
		 
		 \begin{remark}
		 	We tested the convergence rates for our algorithm with Zames-Falb IQCs in place of Sector IQCs but saw no improvement. 
		 \end{remark}
		 
	\begin{figure}
		
		\includegraphics[width=0.5\textwidth]{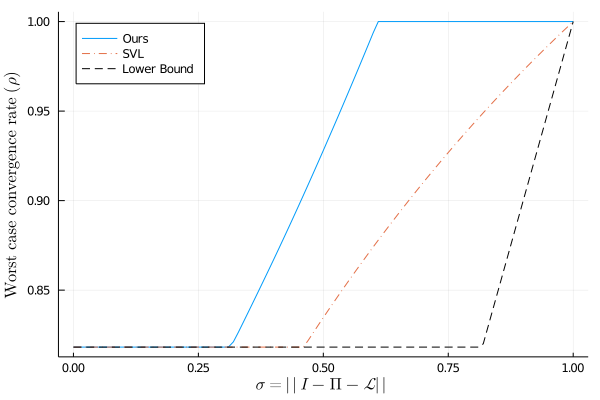}
		\caption{Performance of our algorithm compared with SVL for $\kappa = 10$ and $\sigma \in [0,1)$. Our NIDS-inspired parameter choices result in performance identical to that of NIDS in \cite{sunvanles20}. We have not made any attempts to choose ``optimal'' parameters like those of SVL.}
		\label{fig:perf}
	\end{figure}

	\section{Self-Healing Despite Packet Loss}
	\subsection{Packet loss protocol}
	We next give our agents some additional memory so that they can substitute previously transmitted values when a packet is lost. Each agent $i\in \{1,\dots,n\}$ maintains an edge state $e_{ij}^k$ for each $j\in \mathcal{N}_{\text{in}}(i)$ (the set of neighbors who transmit to $i$). Whenever agent $i$ receives a message from agent $j$, it updates the state $e_{ij}$ accordingly; however, if at time $k$ no message from neighbor $j$ is received, agent $i$ must estimate what would have likely been transmitted. One potential strategy is to substitute in the last message received, but because $y_j$ is growing linearly in quasi steady state, this naive strategy would ruin steady-state 
	accuracy. Instead we must account for the linear growth present in our algorithm, which we can do by analyzing the quantity $y_j^{k}-y_j^{k-1}$ at the quasi fixed point $(w_1^\star, x^\star,u^\star,v^\star)$:
	\begin{align}
	    y_j^{k}-y_j^{k-1} &= \delta(w_{1j}^\star-w_{1j}^\star) + \eta(w_{2j}^k - w_{2j}^{k-1})\\
	    &= \eta(w_{1j}^\star-v_j^\star)\\
	    &= \eta x_j^\star \approx \eta x_i^k
	\end{align}
	Therefore, when a packet is not received by a neighbor, agent $i$ scales its estimate of the optimizer and adds it to its previously received (or estimated) message. The packet loss protocol is summarized in Algorithm 2. By construction, the modifications included in Algorithm 2 will not alter the quasi fixed points of Algorithm 1, though we do not have a stability condition like Theorem 2 to present at this time. Instead, we show simulation evidence that Algorithm 2 does indeed converge, and packet loss does not appear to have a substantial impact on the convergence rate, even when the rate of packet loss is large. In the absence of dropped packets, the state trajectories of Algorithm 2 are equivalent to those of Algorithm 1.
	
	\begin{remark}
	    Algorithm 2 can be modified to include a forgetting factor $q$. If agent $i$ does not receive a packet from neighbor $j$ in $q$ time steps, then agent $i$ assumes that the communication link has been severed and clears $e_{ij}$ from memory.
	\end{remark}
	
	\begin{algorithm}[t]
		\SetAlgoLined
		\LinesNotNumbered
		\KwSty{Initialization:} Each agent $i\in\{1,...,n\}$ chooses $w_{1i}^0,w_{2i}^0 \in \R^{1 \times d}$ arbitrarily. $\La \in \R^{n \times n}$ is the graph Laplacian.  All $e_{ij}$ are initialized the first time a message is received from a neighbor.\\
		\For{$k=0,1,2,...$}{
			\For{$i\in\{1,...,n\}$}{
				\KwSty{Local communication}\\
				$y_i^k = \delta w_{1i}^k + \eta w_{2i}^k$\\[2pt]
				\For{$j\in \mathcal{N}_{\text{in}}(i)$}{
				    \eIf{Packet from $j$ received by $i$}{
				        $e_{ij}^k = y_{j}^k$
				        }
				        {$e_{ij}^k = \eta x_i^{k-1} + e_{ij}^{k-1}$
				    }}
				$v_i^k = \sum_{j=1}^n\mathcal{L}_{ij} e_{ij}^k$\\
				\KwSty{Local gradient computation}\\
				$x_i^k = w_{1i}^k - v_i^k$\\[2pt]
				$u_i^k = \nabla\n f_i(x_i^k)$\\
				\KwSty{Local state update}\\
				$w_{1i}^{k+1} = w_{1i}^k - \alpha u_i^k - \zeta v_i^k$\\[2pt]
				$w_{2i}^{k+1} = w_{1i}^k + w_{2i}^k - v_i^k$
		}}
		\caption{Packet loss protocol}
		\label{alg:2}
	\end{algorithm}

	\subsection{Classification example}
	
	To test the performance of our algorithm under packet loss, we solved a classification problem using the COSMO chip dataset \cite{cosmo} on an $n=7$ node directed ring lattice, shown in Figure \ref{fig:network}, such that $(i,j)\in \mathcal{E}$ when $j \in \{i+1,i+3,i+5\}\mod n$. All edge weights in the graph are set to 1/4 and $\sigma = ||I-\Pi-\La|| = 0.562$. We used the logistic loss function with $L_2$-regularization, yielding local cost functions
	
	\begin{equation}
	    f_i(x_i) = \sum_{j\in S_i} \log(1+e^{-l_jx_i^\tr M(d_j)}) + \frac{1}{n}||x_i||^2,
	\end{equation}
	where $S_i$ is the set of data indices local to agent $i$, $l_j$ is the label of data point $j$, and $M(d_j)$ is the higher-order polynomial embedding of data point $j$ (for more details see the logistic regression example in the COSMO github \cite{cosmo}). Using this cost, the corresponding sector bound $(m,L)$ is approximated as $m = \frac{2}{n}$ and
	\begin{gather}
	    L_i \leq \Big\|\frac{2}{n}I + \frac{1}{4}M_i^\tr M_i\Big\|, \; \;
	    L = \max_i L_i,
	\end{gather}
	where the rows of $M_i$ are $M(d_j)$ for $j\in S_i$.
	\begin{figure}
	    \centering
	    \begin{tikzpicture}
	        \node (0) {};
	        \node (1) at ($(0)+(0:1.8)$) [vertex] {$1$};
	        \node (2) at ($(0)+(-51.43:1.8)$) [vertex] {$2$};
	        \node (3) at ($(0)+(-102.86:1.8)$) [vertex] {$3$};
	        \node (4) at ($(0)+(-154.29:1.8)$) [vertex] {$4$};
	        \node (5) at ($(0)+(-205.71:1.8)$) [vertex] {$5$};
	        \node (6) at ($(0)+(-257.14:1.8)$) [vertex] {$6$};
	        \node (7) at ($(0)+(-308.57:1.8)$) [vertex] {$7$};

			\draw [link] (1) -- (2);
			\draw [link] (1) -- (4);
			\draw [link] (1) -- (6);
			\draw [link] (2) -- (3);
			\draw [link] (2) -- (5);
			\draw [link] (2) -- (7);
			\draw [link] (3) -- (4);
			\draw [link] (3) -- (6);
			\draw [link] (3) -- (1);
			\draw [link] (4) -- (5);
			\draw [link] (4) -- (7);
			\draw [link] (4) -- (2);
			\draw [link] (5) -- (6);
			\draw [link] (5) -- (1);
			\draw [link] (5) -- (3);
			\draw [link] (6) -- (7);
			\draw [link] (6) -- (2);
			\draw [link] (6) -- (4);
			\draw [link] (7) -- (1);
			\draw [link] (7) -- (3);
			\draw [link] (7) -- (5);
	    \end{tikzpicture}
	    \caption{The directed network topology for the classification example. All edge weights are 1/4.}
	    \label{fig:network}
	\end{figure}
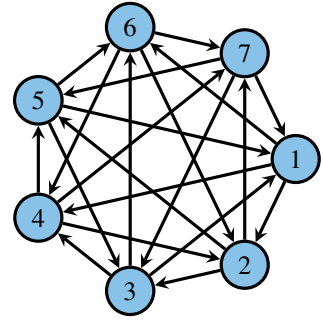
	
	Using $(m,L)$ and $\sigma$, we computed the optimal step size $\alpha$ for our algorithm using Brent's method and computed the SVL parameters as detailed in \cite{sunvanles20}. We then simulated both Algorithm 2 and SVL with and without packet loss and took the maximum error between the distributed algorithms and a centralized solution found using Convex.jl and MOSEK. We ran our algorithms using random initial conditions on the interval $[0,1]$ and the SVL algorithm using zero initial conditions. For the packet loss run of SVL,
	we held the previous message on each edge
	so that the fixed points would be unaffected. Packets had a 30\% chance of being lost, independent of each other. The results of these simulations are shown in Figure \ref{fig:packet_loss}. In this scenario, Algorithm 2 with lossy channels still converges to the optimum at a similar rate as Algorithm 1 with lossless channels, despite the high rate of packet loss that causes SVL to converge with high error. 
	\begin{figure}
		\includegraphics[width=0.5\textwidth]{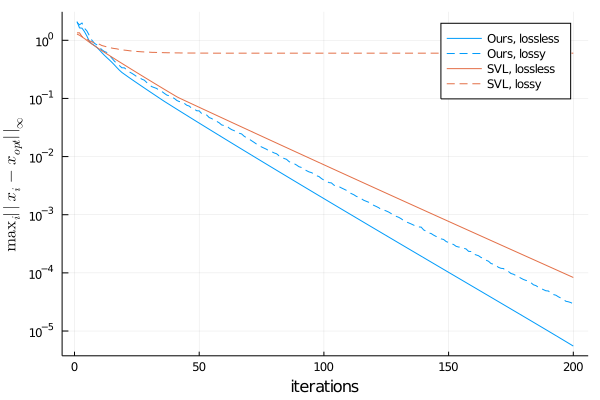}
		\caption{Simulation of Algorithm 1 and SVL in lossless channels as well as Algorithm 2 and SVL in lossy channels. The lossy channels are modeled with an independent 30\% packet loss. Error is the maximum error.}
		\label{fig:packet_loss}
	\end{figure}
	
	\section{Summary and Future Work}
	In this paper, we demonstrated the existence of a parameterized family of first-order algorithms for distributed optimization that do not require system trajectories to evolve on 
	a pre-defined
	subspace, despite having a single communicated variable. These algorithms are self-healing; they do not require the system to be initialized
	precisely
	and will recover from events such as agents dropping out of the network or changes to objective functions that might otherwise introduce uncorrectable errors. Furthermore, our algorithms can be augmented with our packet loss protocol, thereby allowing the system to converge to the optimizer even in the presence of heavily lossy communication channels. Our algorithms converge with a linear rate to the optimizer but contain an internal instability that grows linearly in time; however, this instability is unlikely to cause issues unless run over long time horizons.
	
	There is much left to investigate. We still need to consider the properties of other factorizations of $G(z)$ in \eqref{eq:Gz}, and possible factorizations of algorithms that are not subsumed by the SVL template. We need to explore the parameter space of the algorithm presented in this paper and, particularly, investigate if an optimization like that used to find the SVL parameters can be carried out. Finally, we will investigate a formal proof that Algorithm 2 still converges in the presence of packet loss.

	
	\balance
	\renewcommand*{\bibfont}{\footnotesize}
	\printbibliography
	
\end{document}